\theoremstyle{definition}
\newtheorem{thm}[subsection]{Theorem}
\newtheorem{prop}[subsection]{Proposition}
\newtheorem{cor}[subsection]{Corollary}
\newtheorem{lemma}[subsection]{Lemma}
\newtheorem{remark}[subsection]{Remark}
\newtheorem{example}[subsection]{Example}
\newcommand{\J}{\mathcal{J}}
\newcommand{\R}{\mathbb{R}}
\newcommand{\RP}{\mathbb{RP}}
\newcommand{\CP}{\mathbb {CP}}
\newcommand{\Z}{\mathbb{Z}}
\newcommand{\ZZ}{\mathbb{Z}}
\newcommand{\JJ}{\mathcal J}
\newcommand{\jj}{\texttt{j}}
\newcommand{\C}{\mathbb{C}}
\newcommand{\OO}{\mathbb{O}}
\newcommand{\HH}{\mathbb{H}}
\DeclareMathOperator{\Hom}{Hom}
\DeclareMathOperator{\ev}{ev}
\DeclareMathOperator{\Gr}{Gr}
\DeclareMathOperator{\Map}{Map}
\DeclareMathOperator{\Spin}{Spin}
\begin{document}

\title{On the topology of the space of almost complex structures on the six sphere}

\author{Bora Ferlengez}
\address{Deparment of Mathematics and Statistics, Hunter College of CUNY, 695 Park Avenue, 01220 New York}
\email{bora.ferlengez@gmail.com}

\author{Gustavo Granja}
\address{Center for Mathematical Analysis, Geometry and Dynamical Systems, Instituto Superior T\'ecnico,
Universidade de Lisboa, Av. Rovisco Pais, 1049-001 Lisboa}
\email{gustavo.granja@tecnico.ulisboa.pt}

\author{Aleksandar Milivojevi\'c}
\address{Max Planck Institute for Mathematics,
Vivatsgasse 7,
53111 Bonn}
\email{milivojevic@mpim-bonn.mpg.de}

\begin{abstract} The space of orientation-compatible almost complex structures on $S^6$ naturally contains a copy of $\RP^7$. We show that the inclusion induces an isomorphism on fundamental groups and rational homotopy groups. We also compute the homotopy fiber of the inclusion and the homotopy groups of the space of almost complex structures in terms of the homotopy groups of $S^7$. Our approach lends itself to generalization to components of almost complex structures with $c_1 = 0$ on six--manifolds. \end{abstract}

\maketitle

\section{Introduction}

In this note we consider the topology of the space of almost complex structures $\mathcal{J}(S^6)$ on the six--dimensional sphere $S^6$. By an almost complex structure we will mean one which induces a fixed orientation and which is orthogonal with respect to some fixed metric on $S^6$; swapping the orientation, varying the metric, or removing the metric compatibility condition altogether results in a homotopy equivalent space via the appropriate natural map. For this reason we from now on take $S^6$ to be the unit sphere in the imaginary octonions endowed with the round metric and the standard orientation.

We will employ the useful description of $\JJ(S^6)$ as the space of sections of the natural $SO(6)/U(3) \cong \CP^3$ bundle over $S^6$ whose fibers are the complex structures on the given tangent vector space. Likely one would be interested primarily in the smooth sections of this bundle, corresponding to smooth almost complex structures; the inclusion of the space of smooth sections into the space of continuous sections is a homotopy equivalence, so we will study the latter without loss of generality. 

Let $J^{\textrm{cn}}$ denote the canonical almost complex structure given on $T_pS^6$ by octonion multiplication 
$$
J^{\textrm{cn}}_p(v)=pv.
$$


The group $SO(7)$ acts naturally on
 the space of sections by
$$
(A \cdot J)_p(v)= AJ_{A^{-1}p}(A^{-1}v).
$$

For our canonical almost complex structure $J^{\textrm{cn}}$ we have that $A\cdot J^{\textrm{cn}}=J^{\textrm{cn}}$ means
$$ A(pv) =(Ap)(Av), $$
and so the isotropy of this action is the group $G_2$ of real algebra automorphisms of the octonions. We thus obtain an inclusion
$$ SO(7)/G_2 \cong \RP^7 \stackrel{j}{\hookrightarrow} \JJ(S^6). $$

This subspace was previously studied by Battaglia \cite{Ba}. Her description of the space of complex structures on $\R^6$ in terms of octonion multiplication is crucial to the present work (see Proposition \ref{csr6}). 

The aim of this note is to show that the homotopy fiber of the inclusion $\RP^7 \stackrel{j}{\hookrightarrow} \JJ(S^6)$ is a path component of the sevenfold based loop space $\Omega^7S^7$ of the seven--sphere. As the homotopy groups of the latter space are torsion, we recover the result of the first named author \cite[\textsection 4]{F18} that $\JJ(S^6)$ has the rational homotopy groups of $S^7$ (or $\RP^7$), and see that the natural inclusion of $\RP^7$ induces an isomorphism on rational homotopy groups and on fundamental groups. The non-contractibility of the homotopy fiber of the inclusion also answers a question communicated by Dennis Sullivan and stated in loc. cit., on whether this inclusion is a genuine (i.e. integral) homotopy equivalence.

We then touch upon the general problem of the topology of the space of almost complex structures on a closed six--manifold in the 
component of an almost complex structure with $c_1 = 0$. This, along with the case of $c_1 \neq 0$, will be studied more systematically in upcoming work.

\subsection*{Notation and conventions} Given spaces $X$ and $Y$, $\Map(X,Y)$
denotes the space of (continuous) maps with the compact-open topology. 
Spaces of sections, and spaces of pointed maps are topologized as
subspaces of the relevant mapping spaces. If $X$ and $Y$ are pointed spaces, we write $\Omega^k X$ for the $k$-fold loop space on $X$, i.e. the space $\Map_*(S^k, X)$ of pointed maps from $S^k$ to $X$, and by $[X,Y]_*$ we denote pointed homotopy classes of maps. 

\subsection*{Acknowledgements} We thank Luis Fernandez and Scott Wilson for useful comments, and the anonymous referee for helping improve the exposition; the third author likewise thanks Claude LeBrun and Dennis Sullivan. The second author was partially supported by FCT/Portugal through project UIDB/MAT/04459/2020. 

\section{Complex structures on $\R^6$}
We use \cite{CS} as a reference for the normed division algebra $\OO$ of the octonions. This algebra is obtained from the 
quaternions $\HH$ by the Cayley--Dickson construction: given quaternions $a,b$, an octonion is a pair of quaternions $(a,b)$, written $a+Ib$, with 
 multiplication rule given by 
\begin{equation}
\label{CD}
(a+Ib)(c+Id)=(ac-d\overline b) + I(cb+\overline a d)
\end{equation}
The previous formula can be applied more generally. A composition algebra is 
a normed, unital algebra over $\R$ where the norm comes from an inner product on the underlying vector space. Such an algebra
has a conjugation map defined by $\overline x = 2 \langle x , 1 \rangle 1 - x$.
Given a composition algebra $C$, let $A$ be a subalgebra (containing $1$) and 
suppose $I$ is a unit vector in $C$ orthogonal to $A$. Then one can show 
(see \cite[Sections 6.1-6.3]{CS}) that $IA$ is orthogonal to $A$ and $A \oplus IA$ 
is a subalgebra of $C$ with multiplication given by \eqref{CD}.

In particular, if we pick any unit imaginary octonion $x$, we obtain a copy $\langle 1, x\rangle$ of 
the complex numbers in $\OO$. If we further pick a unit vector $y \in \langle 1,x \rangle^\perp$
we obtain a copy of the quaternions $\langle 1,x,y,yx\rangle$ which we call a quaternion subalgebra $A \subset \OO$. A choice of 
a unit vector orthogonal to $A$ then allows us to express the 
multiplication in $\OO$ in terms of pairs of elements of $A$ by
the formula \eqref{CD}.
 
Note that, as a consequence of the above discussion, the
group $G_2$ of automorphisms of $\OO$ acts transitively on unit vectors orthogonal 
to $\R$ (and, in fact, simply transitively on the orthonormal $3$--frames $(x,y,z)$ in $\R^\perp$ such 
that $z \perp yx$).
 
We identify $\R^6$ with the orthogonal complement to $\C = \langle 1,i \rangle$,
$$
\R^6 = \langle 1,i \rangle^\perp \subset \OO.
$$
A complex structure $J$ on $\R^6$ is said to be compatible with the standard orientation if
a complex basis $(u,v,w)$ for $\R^6$ gives rise to a real basis $(u,Ju,v,Jv,w,Jw)$ with the standard orientation. Given 
a unit octonion $x$, let $R_x \in SO(8)$ denote right multiplication by the octonion $x$.

\begin{prop}
\label{csr6}
Let $J$ be an orthogonal complex structure on $\R^6$ compatible with the standard orientation. The following (equivalent) statements hold:

\begin{enumerate} \item $J$ is obtained by conjugating 
the standard complex structure (given by left multiplication by $i$) by the map $R_{\overline x}$ for some unit octonion $x$, which is unique up to left multiplication by an element of $S^1 \subset \langle 1, i \rangle = \C$. Namely, $J$ is of the form
$$
J_x(v) = (i(vx))\overline x  =: i^{R_{\overline x}}(v).
$$
\item $J$ has a complex line in common with $i$, i.e. there is a complex line $L$ on $\R^6$ (with complex structure given by $i$) such that $J$ restricted to $L$ is equal to (left multiplication by) $i$. \end{enumerate}
\end{prop}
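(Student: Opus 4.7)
The plan is to analyze the map $\Phi : S^7 \to \JJ(\R^6)$, $x \mapsto J_x$, and show that it induces a bijection $\bar\Phi : S^7/S^1 \to \JJ(\R^6)$ between two incarnations of $\CP^3$: the domain via the Hopf quotient by left multiplication by $S^1 \subset \C$, and the codomain via $\JJ(\R^6) \cong SO(6)/U(3) \cong \CP^3$. This yields (1) directly, and (2) follows by exhibiting a common complex line attached to each $J_x$.

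First I would verify that $J_x = R_{\bar x} L_i R_x$ is a well-defined element of $\JJ(\R^6)$. Since $R_x$ is right multiplication by a unit octonion, $R_x \in SO(8)$ with $R_x^{-1} = R_{\bar x}$ by alternativity ($(v\bar x)x = v(\bar x x) = v$), so $J_x$ is an orthogonal complex structure on $\OO$. For it to restrict to $\R^6 = \langle 1, i \rangle^\perp$, one needs $R_x(\R^6) = \langle x, ix\rangle^\perp$ to be $L_i$-invariant, which reduces to $L_i$-invariance of $\langle x, ix\rangle$; this holds by Artin's theorem, since $i(ix) = (ii)x = -x$. Orientation compatibility then follows by connectedness of $S^7$ together with the fact that $J_1 = L_i$. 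For (2), given $J = J_x$ I decompose $x = a + b$ with $a \in \C$ and $b \in \R^6$, and take $L = \langle b, ib\rangle \subset \R^6$ (or any $i$-complex line in $\R^6$ if $b = 0$). For $v \in L$, the triple $i, v, x$ lies in the associative subalgebra generated by $i$ and $b$ (since $x - b = a \in \C$), so $[i, v, x] = 0$ and hence $J_x(v) = ((iv)x)\bar x = (iv)(x\bar x) = iv$.

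The main obstacle will be showing that $\bar\Phi$ is well-defined and bijective. The vanishing of the associator $[a, b, c]$ whenever $a, c \in \C$ (a consequence of the alternating property of the associator on $\OO$ and the trivial vanishing $[1, b, i] = 0$) combined with Moufang identities should give $J_{\lambda x} = J_x$ for $\lambda \in S^1$, by rewriting the expression $(i(v(\lambda x)))(\bar x \bar\lambda)$ in the form $(i(vx))\bar x$; navigating the non-associativity here is the most delicate point. Once $\bar\Phi : \CP^3 \to \CP^3$ is defined, bijectivity follows either by an open–closed argument (smoothness together with matching real dimensions and connectedness of the target), or more concretely by observing that the common complex line constructed above uniquely recovers $x$ modulo the $S^1$ action, providing an inverse construction.
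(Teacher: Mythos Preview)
Your verification that $J_x$ lies in $\JJ(\R^6)$ is fine, and your argument for (2) \emph{assuming} $J=J_x$ is correct and pleasantly direct: the subalgebra generated by $i$ and $b$ is associative, so $J_x$ agrees with $i$ on $\langle b, ib\rangle$. However, the heart of the proposition is the surjectivity in (1), and here both of your proposed arguments have a genuine gap.

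The open--closed argument as stated is incomplete: smoothness, equal dimensions, and connectedness of the target do not by themselves force a map $\CP^3\to\CP^3$ to be bijective (a constant map satisfies all three). You would need either injectivity (to invoke invariance of domain) or a proof that $\bar\Phi$ is a submersion, and you supply neither. Your ``inverse construction'' does not give injectivity: the common line $L=\langle b,ib\rangle$ depends only on the $\C$-line through $b$ in $\R^6$, i.e.\ on a point of $\CP^2$, whereas $[x]\in S^7/S^1\cong\CP^3$. Concretely, for fixed $L$ the unit octonions $x=a+b$ with $b$ in the direction of $L$ form a $3$-sphere, and modding out by the free $S^1$-action leaves an $S^2$ of classes $[x]$ all producing the same $L$. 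So $L$ alone cannot recover $[x]$; one must also record how $J$ acts on $L^\perp$, and you have not argued that this extra datum pins down $[x]$ (nor, for a general $J$ not yet known to be of the form $J_x$, that such an $L$ even exists).

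By contrast, the paper establishes (2) for an \emph{arbitrary} $J$ first, via an intersection-theoretic argument: the locus $\CP^2_J\subset\Gr_2^+(\R^6)$ of $J$-complex lines has self-intersection equal to the Euler number of its normal bundle, which is computed to be $1$, so $\CP^2_J\cap\CP^2_i\neq\varnothing$ for every $J$. With (2) in hand for all $J$, the paper then builds $x$ explicitly from the common line $L$ together with the restriction of $J$ to $L^\perp$, and checks uniqueness by a quaternion-coordinate computation. Your route (prove (1) directly, then deduce (2)) could be made to work, but it requires an honest injectivity or local-diffeomorphism argument for $\bar\Phi$ that you have not supplied; the paper's route trades this for the Euler-class computation.
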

\begin{proof} 
The formula above for $J_x$ can be understood as follows using quaternion coordinates: let $A$ denote a quaternion subalgebra 
of $\OO$ containing $1,i$ and $x$ (which is unique unless $x\in \C$). Fix a standard basis $\langle 1, i, \jj, i\jj \rangle$ for
this algebra and write
$$
x = \cos \theta + w \sin \theta 
$$
with $w$ a unit vector in the $\langle i, \jj, i\jj \rangle$ plane. Let $I \in \OO$ be a unit vector orthogonal to $A$ and write
an element $v \in \R^6$ as 
$$
v = a + I b
$$
with\footnote{ We are interested in the case $a=z\jj$ with $z \in \C$ but the computation holds for an arbitrary $a \in A$.}
$a,b \in A$. Using \eqref{CD} we see that 
\begin{eqnarray*}
(i(v x))\overline x & = &  (i((a+Ib)(\cos \theta + w \sin \theta)) )(\cos \theta - w \sin \theta) \\
& = & ( ia(\cos \theta + w\sin \theta) + I((-i)((\cos \theta + w \sin\theta)b)   ))(\cos \theta - w \sin \theta) \\
& = & ia +I ((\cos \theta - w \sin \theta)(-i)(\cos \theta + w \sin \theta) b) \\
& = & ia + ((\cos \theta - w\sin \theta)i(\cos \theta + w \sin \theta))(Ib)
\end{eqnarray*}
This means that $J_x$ acts as
\begin{itemize}
\item left multiplication by $i$ on the quaternion subalgebra $A$,
\item left multiplication by $l$ on the orthogonal plane $A^\perp$ where $l$ is the unit quaternion
obtained from $i$ by rotating the unit imaginary quaternions in $A$ by an angle of $2\theta$ around the axis $w$.
\end{itemize}

If $x$ and $y$ are unit octonions such that $J_x=J_y$ then $x$ and $y$ must both be
contained in the same quaternion 
subalgebra $A=\langle 1,i,\jj,i\jj \rangle \subset \OO$. If we write $x=z_1 + z_2 \jj$ and $y=w_1+w_2 \jj$ with $z_i,w_i \in \C$, the equality of $J_x$ and $J_y$
on $A^\perp$ translates to
$$
(\overline{z_1} - z_2 \jj )i(z_1 + z_2 \jj ) = (\overline{w_1} - w_2 \jj )i(w_1+w_2 \jj),
$$
and one easily checks this amounts to the existence of $e^{i\theta}$ such that $y=e^{i\theta}x$.

We first prove the equivalence of statements (1) and (2). That (1) implies (2) is immediate from the description of $J_x$ in quaternion coordinates.
Conversely, assume that $J$ is an almost complex structure agreeing with $i$ on the complex line $L\subset \R^6$. Then
$A=\C\oplus L$ is a quaternion subalgebra of $\OO$. Since $J$ is orthogonal and preserves orientation, its restriction
to $A^\perp$ must be given by left multiplication by a unit imaginary element $l \in A$. Clearly there exists 
$x= \cos \theta + w \sin\theta \in A$ such that $\overline x i x = l$ and, for such an $x$, we have $J=J_x$. 
We have already seen that $x$ is unique up to left multiplication by a unit complex number. 

Statement (1) follows immediately from \cite[Proposition 3.5]{Ba}, but for completeness we give a proof of (2). 
A complex structure $J$ on $\R^6$ determines a subspace
$$\CP^2_J \subset \Gr^+_2(\R^6)$$ 
consisting of $J$-complex lines inside $\Gr^+_2(\R^6)$ (the Grassmannian of oriented planes). 
Since the space of complex structures compatible with the orientation is path connected, the fundamental 
class $[\CP^2_J] \in H_4(\Gr^+_2(\R^6))$ is independent of $J$. The self intersection of $\CP^2$ in
$\Gr_2^+(\R^6)$ is given by the Euler number of the normal bundle, which we check to be $1$ in Lemma \ref{eulerclass} below. It follows that, 
for any two complex structures $J,J'$, the subspaces $\CP^2_J$ and $\CP^2_{J'}$ intersect, i.e. $J$ and $J'$
have a complex line in common. 
If $J$ and $J'$ are orthogonal and distinct, this line is necessarily unique, and $J$ and $J'$ must agree on that line.
\end{proof}

\begin{lemma}\label{eulerclass} 
The Euler number of the normal bundle to $\CP^2$ in $\Gr_2^+(\R^6)$ is 1.
\end{lemma}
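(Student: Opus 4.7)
The plan is to identify the normal bundle $N$ as a complex rank-$2$ vector bundle on $\CP^2$ and compute its top Chern class directly.

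First fix a compatible orthogonal complex structure $J$ on $\R^6$, so that the embedded $\CP^2 \subset \Gr_2^+(\R^6)$ is the submanifold $\CP^2_J$ of $J$-complex lines. At a point $L \in \CP^2$, the ambient tangent space $T_L \Gr_2^+(\R^6) = \Hom_\R(L, L^\perp)$ carries two commuting complex structures given by post-composition with $J|_{L^\perp}$ and pre-composition with $J|_L$, and the resulting common eigenspace decomposition
$$
\Hom_\R(L, L^\perp) = \Hom_\C(L, L^\perp) \oplus \Hom_{\bar\C}(L, L^\perp)
$$
identifies the first summand with $T_L \CP^2$. Hence $N_L \cong \Hom_{\bar\C}(L, L^\perp) \cong \overline{L}^* \otimes_\C L^\perp$.

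Next I would globalize. Writing $\gamma$ for the tautological line bundle on $\CP^2$ and $\gamma^\perp$ for its $\C$-orthogonal complement inside the trivial $\underline{\C^3}$ bundle, a Hermitian metric yields $\overline{\gamma}^* \cong \gamma$, so $N \cong \gamma \otimes_\C \gamma^\perp$ as complex bundles. From $c_1(\gamma) = -h$ (with $h$ the positive generator of $H^2(\CP^2;\Z)$) and $c(\gamma^\perp) = 1 + h + h^2$ read off from $\gamma \oplus \gamma^\perp = \underline{\C^3}$, a short Chern-root computation gives $c_2(N) = h^2$, which integrates to $1$ over $[\CP^2]$.

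The one point that needs care is confirming the sign, i.e. that the complex orientation on $N$ coming from the identification above agrees with its orientation as the normal bundle of an oriented submanifold in the oriented ambient manifold. This holds because $\Gr_2^+(\R^6)$ is itself a complex manifold (the complex quadric $Q_4 \subset \CP^5$) whose complex structure at $L$ is precisely post-composition by $J|_{L^\perp}$ on $\Hom_\R(L, L^\perp)$; under this structure both $T_L \CP^2$ and $N_L$ are complex subspaces with the orientations used above, so the two orientations on $N$ agree and the Euler number is $+1$.
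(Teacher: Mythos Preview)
Your argument is essentially the paper's: identify the normal bundle with $\tau\otimes\tau^\perp$ (your $\gamma\otimes\gamma^\perp$) and compute $c_2$. The paper does the Chern class computation by pulling back $\tau_1\otimes\tau_2$ over $BU(1)\times BU(2)$ to the maximal torus and reading off $c_2(\tau_1\otimes\tau_2)=c_1(\tau_1)^2+c_1(\tau_1)c_1(\tau_2)+c_2(\tau_2)$, which is exactly your Chern-root calculation in different packaging.

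The orientation discussion you add is a point the paper passes over in silence, and it is good that you raise it. But your description of the quadric complex structure is not quite right. Under the identification $\Gr_2^+(\R^6)\cong Q_4$ sending an oriented orthonormal frame $(e_1,e_2)$ to $[e_1+ie_2]$, the induced complex structure on $T_L\Gr_2^+(\R^6)=\Hom_\R(L,L^\perp)$ is $\phi\mapsto\phi\circ J_L^{-1}$, i.e.\ \emph{pre}-composition by the inverse of the intrinsic rotation of the oriented plane $L$; it is not post-composition by $J|_{L^\perp}$. At a $J$-complex line $L$ (so $J_L=J|_L$) these two operations agree on $\Hom_{\overline\C}(L,L^\perp)$ but differ by a sign on $\Hom_\C(L,L^\perp)$, so with the quadric structure $\CP^2_J$ sits in $Q_4$ as an anti-holomorphic rather than holomorphic submanifold. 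Your conclusion survives because both $T\CP^2$ and $N$ have even complex rank, so conjugating the complex structure leaves the orientations unchanged and the Euler number is still $+1$; you should just replace the incorrect description of the quadric structure by this parity observation.
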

\begin{proof}
Near each $P \in \Gr_2^+(\R^6)$ we have standard coordinates with values in $\Hom(P,P^\perp) \cong \Hom(\R^2,\R^4)$. If $P \in \CP^2$, then
both $P$ and $P^\perp$ are complex planes and hence $\Hom(P,P^\perp)$ breaks up into a direct sum of the subspaces of 
complex linear and complex anti-linear maps
$$
\Hom(P,P^\perp) = \Hom_\C(P,P^\perp) \oplus \Hom_{\overline \C}(P,P^\perp).
$$
The first summand can naturally be identified with the tangent space $T_P \CP^2$, while the second can be naturally identified with the fiber over $P$ of the normal bundle $N\CP^2$. 
Our aim is to compute $c_2(N\CP^2)$, as the Euler number is
obtained by evaluating this cohomology class on the fundamental
class of $\CP^2$.

Let $\tau$ denote the tautological bundle over $\CP^2$, and 
$\tau^\perp$ its orthogonal complement in the trivial bundle $\CP^2 \times \C^3$.
Since $\Hom_{\overline \C}(P,P^\perp) = (\overline P)^* \otimes_\C P^\perp \cong P \otimes_\C P^{\perp}$,
we have $N\CP^2 \cong \tau \otimes \tau^\perp$.

Let $a=c_1(\tau) \in H^2(\CP^2)$. Then $(1+a)(1+c_1(\tau^\perp)+c_2(\tau^\perp))=1$ and therefore 
$c_1(\tau^\perp)=-a$, $c_2(\tau^\perp)=a^2$. This computation gives us the homomorphism
induced on cohomology by the map $\CP^2 \to BU(1)\times BU(2)$ which expresses $N\CP^2 \cong \tau \otimes \tau^\perp$ as the pullback of the tensor product $\tau_1 \otimes \tau_2$ of the tautological 
bundles  over $BU(1)\times BU(2)$. It remains to compute $c_2(\tau_1\otimes \tau_2)$.

The restriction of $\tau_1 \otimes \tau_2$ to the classifying space of the standard maximal torus is the bundle 
\begin{equation}
\label{bundlemaxtor}
EU(1)^3 \otimes_{ U(1)^3} \C^2 \to BU(1)^3 
\end{equation}
where $U(1)^3$ acts on $\C^2$ via the character 
\begin{equation}
\label{char}
\chi(e^{i\theta}, e^{i\alpha},e^{i\beta}) = (e^{i(\alpha+\theta)}, e^{i(\beta+\theta)})
\end{equation}
Let $x_i$ for $i=1,2,3$ denote the Chern class of the tautological line bundle over each of the $BU(1)$
factors. The second Chern class of the bundle \eqref{bundlemaxtor} is 
$(x_1+x_2)(x_1+x_3)= x_1^2 + x_1(x_2+x_3) + x_2 x_3$.
Therefore $c_2( \tau_1 \otimes \tau_2) = c_1(\tau_1)^2 +c_1(\tau_1) c_1(\tau_2) + c_2(\tau_2)$ and hence
$$
c_2( N\CP^2) = a^2 + a(-a) + a^2 = a^2
$$
which completes the proof. 
\end{proof}

\begin{remark}
\label{remjr6}
Proposition \ref{csr6} gives rise to the following geometric description of the space $J(\R^6)$ of orthogonal complex 
structures on the vector space $\R^6$ compatible with the standard orientation: Let $\CP^2$ be the space of complex lines in $\R^6$ 
for the complex structure $i$.
The tautological line bundle $\tau \to \CP^2$ gives rise to a bundle of quaternion algebras 
$\tau\oplus \langle 1,i \rangle \to \CP^2$. There is a map from the unit sphere bundle 
$S(\tau \oplus \C) \to J(\R^6)$ sending $x \mapsto i^{R_x}$, which descends to 
the quotient by the natural action of $S^1 \subset \C$.  This yields
a surjective map $\phi \colon P(\tau\oplus \C) \to J(\R^6)$ which collapses 
$P(0\oplus \C) \subset P(\tau\oplus \C)$ to the point $i$ and 
is bijective on the complement of $P(0\oplus \C)$. The map $\phi$ expresses $J(\R^6)$ as a
blow-down of $\CP^3 \sharp \overline{\CP^3}$.

Alternatively, a complex structure other than $i$ determines an element of $\CP^2$ (the line on which it coincides with $i$). 
Identifying $J(\R^6)$ with $\CP^3$, we obtain a map
$$ \CP^3 \setminus \{i\}  \to \CP^2 $$
with fiber the affine line $\CP^1 \setminus \ast$ of all possible orthogonal complex structures on the orthogonal $\R^4$ other than 
$i$ itself. Considering the section given by assigning to $L \in \CP^2$ the complex structure given by $i$ on $L$ and $-i$ on 
$L^\perp$, we may regard this fibration as the standard fibration of $\CP^3\setminus \ast$ by linear projection onto  
$\CP^2$ from a point at $\infty$ on a complementary line. 
\end{remark}

\section{Almost complex structures on $S^6$}

Recall that $S^6$ denotes the unit sphere in the imaginary octonions equipped with the 
round metric and the standard orientation. 
Each $p \in S^6$ determines (by left octonion multiplication) an orthogonal complex structure
on $\R^8$ and on $T_p S^6 = \langle 1, p \rangle^\perp \subset \OO$. 
Since there is an automorphism of the octonions taking $i$ to $p$, it follows from Proposition \ref{csr6} that an arbitrary orthogonal complex structure on
$T_{p}S^6$ can be written as $p^{R_{\overline{x}}}$ for some unit octonion $x$.

For an oriented Riemannian $2n$--manifold $M$ we refer to the natural $SO(2n)/U(n)$ bundle over $M$, whose fibers are the complex structures on the given tangent vector space, as the \textit{twistor space} $Z(M)$. We have the following description of the twistor space $Z(S^6) \cong SO(7)/U(3)$:

\begin{prop} 
\label{funnyaction}
Consider the $S^1$-action on $S^6 \times S^7$ defined by
$$ e^{i\theta} \cdot(p, x) = (p, (\cos \theta + p \sin \theta) x) $$
The map 
$$ (S^6 \times S^7)/S^1 \xrightarrow{\psi} Z(S^6) $$
given by
$$ [(p, x)] \mapsto  p^{R_{\overline x}}  $$
is  a diffeomorphism.
\end{prop}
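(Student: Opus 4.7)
The strategy is to regard $\psi$ as a $G_2$-equivariant smooth map of smooth fiber bundles over $S^6$ and apply Proposition \ref{csr6} fiberwise.

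First, I would verify that the formula defines a smooth free $S^1$-action on $S^6 \times S^7$. Since $p^2 = -1$ for any $p \in S^6 \subset \im \OO$, the plane $\langle 1, p\rangle \subset \OO$ is a commutative associative subalgebra isomorphic to $\C$ with unit circle $\{\cos\theta + p\sin\theta\}$; alternativity of $\OO$ makes left multiplication by this subalgebra on $\OO$ associative with itself, so the $S^1$-action is well-defined, and freeness follows from $\OO$ being a division algebra. Thus $(S^6 \times S^7)/S^1$ is a compact smooth $12$-manifold fibered smoothly over $S^6$, and $\dim Z(S^6) = \dim SO(7)/U(3) = 21 - 9 = 12$ agrees.

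Next, I would establish $G_2$-equivariance. The diagonal $G_2$-action on $S^6 \times S^7$ by octonion automorphisms commutes with the $S^1$-action, since $\phi(\cos\theta + p\sin\theta) = \cos\theta + \phi(p)\sin\theta$, and so descends to the quotient; $G_2$ also acts on $Z(S^6)$ through its action on $S^6$ and by conjugation of complex structures. The intertwining identity
$$
\phi \circ p^{R_{\overline x}} \circ \phi^{-1} = \phi(p)^{R_{\overline{\phi(x)}}}
$$
is a direct consequence of $\phi$ being a real algebra automorphism, and makes $\psi$ $G_2$-equivariant. Transitivity of $G_2$ on $S^6$ (recalled in Section~2) then reduces everything to the single fiber over $p = i$, where $\psi$ restricts to $[x] \mapsto i^{R_{\overline x}}$ from $S^7/S^1$ to $J(\R^6)$. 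Well-definedness and bijectivity of this restriction are exactly the content of Proposition \ref{csr6}, since $\langle 1,i\rangle \cap S^7 = S^1$ is precisely the stated $S^1$-ambiguity.

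Finally, I would upgrade from bijection to diffeomorphism by showing $\tilde\psi \colon S^7 \to J(\R^6)$, $x \mapsto i^{R_{\overline x}}$, is a submersion. Since the fibers of $\tilde\psi$ are the $S^1$-orbits, submersivity implies the induced map $S^7/S^1 \to J(\R^6)$ is a local diffeomorphism between compact connected $6$-manifolds, and a bijective local diffeomorphism is automatically a diffeomorphism. A direct differentiation gives
$$
d\tilde\psi|_x(u)(v) = (i(vu))\overline x + (i(vx))\overline u, \qquad u \in T_xS^7,\ v \in \R^6,
$$
and the main technical obstacle is to verify that the kernel in $u$ is exactly the tangent line $\R \cdot ix$ to the $S^1$-orbit through $x$. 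This is a calculation in octonion arithmetic relying on the Moufang identities and the non-degeneracy of the associator on triples $\{i,v,u\}$ with $u \notin \langle 1,i\rangle$ --- the one place where non-associativity of $\OO$ makes the argument nontrivial.
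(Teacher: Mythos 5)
Your plan is correct, and its core coincides with the paper's proof: both arguments rest on applying Proposition \ref{csr6} fiberwise, using an octonion automorphism carrying $i$ to $p$ (your $G_2$-equivariance, made explicit) to reduce to the fiber over $i$, and identifying the $S^1$-ambiguity in Proposition \ref{csr6}(1) with the orbits of the given $S^1$-action; this yields well-definedness and fiberwise bijectivity, hence that $\psi$ is a homeomorphism. The one place you genuinely diverge is the final upgrade to a diffeomorphism: the paper simply notes that $\psi$ is smooth and \emph{leaves the smoothness of $\psi^{-1}$ to the reader}, whereas you propose the standard submersion argument. Your differential formula is right, and the kernel claim is easy to confirm at $x=1$, where for imaginary $u$ one gets $d\tilde\psi|_1(u)(v)=i(vu)-(iv)u$, minus the associator of $(i,v,u)$, which is nonzero for suitable $v\in\langle 1,i\rangle^\perp$ (e.g.\ $v$ orthogonal to the quaternion algebra generated by $i$ and $u$) whenever $u\notin\langle 1,i\rangle$. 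Be aware, though, that at a general $x$ the expression $(i(vu))\overline x+(i(vx))\overline u$ is not a single associator, so the deferred computation is messier than your description suggests; a cleaner route is to promote your $G_2$-equivariance to $\Spin(7)$-equivariance using companions exactly as in Proposition \ref{identRP7}, which moves any $(p,x)$ to $(i,1)$ and reduces the submersion check to the one point where it is immediate. With that step completed, your argument is a legitimate (and more self-contained) proof of the statement.
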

\begin{proof}
The quaternion subalgebras containing $p$ and $x$ are the same as the quaternion subalgebras
containing $p$ and $(\cos \theta + p \sin \theta)x$. The computation in the proof of 
Proposition \ref{csr6} shows that 
$$ (p(v( (\cos \theta + p \sin \theta) x)))\overline{((\cos \theta + p \sin \theta) x)} =
(p (v x))\overline x $$
so that $\psi$ is well defined. Again by Proposition \ref{csr6}, the map $\psi$ is a fiberwise bijection and
hence a fiberwise homeomorphism. Clearly $\psi$ is a smooth map. We leave it to the reader to check that 
$\psi^{-1}$ is also smooth.
\end{proof}

\begin{remark}
Recall that $G_2$ acts transitively on $S^6$ and
the isotropy group of $i$ can be identified with $SU(3)$. The fibration 
$$ SO(6)/U(3) \to SO(7)/U(3) \to S^6 $$
is associated to the principal fibration
\begin{equation}
\label{prin} SU(3) \to G_2 \to S^6
\end{equation}
where $SU(3)$ acts on $\CP^3 = SO(6)/U(3)$ fixing a distinguished point (in the fiber over $p$, this is the tautological
complex structure $p$). Indeed, we have a map 
$$ G_2 \times S^7 \xrightarrow{\Psi} S^6 \times S^7 $$
given by 
$$ (\phi,x) \mapsto ( \phi(i), \phi(x)) $$
If $\lambda \in SU(3)$, we have $\Psi(\phi\lambda, x) = \Psi(\phi, \lambda(x))$ so 
$\Psi$ descends to a diffeomorphism $\overline \Psi \colon G_2 \times_{SU(3)} S^7 \to S^6 \times S^7$.
As $\Psi$ conjugates the standard $S^1$ action on the
second coordinate of $G_2 \times S^7$ to the action of 
Proposition \ref{funnyaction}, we see that $\overline \Psi$ induces a diffeomorphism
$$
G_2 \times_{SU(3)} \CP^3 \to (S^6 \times S^7)/S^1.
$$
Note that the clutching map for \eqref{prin} is a generator of 
$\pi_5 SU(3)$ (see for instance \cite{G}) 
which becomes trivial in $\pi_5 G_2$, so the bundle of normed division algebras associated to 
\eqref{prin} is trivial. The map $\overline \Psi$ is an explicit trivialization.
\end{remark}

Consider the map of fibrations over $S^6$
$$
\xymatrix{
S^6 \times S^7 \ar[r] \ar[d] & Z(S^6) \ar[d] \\
S^6 \ar[r] & S^6
}
$$

where the top map is obtained by taking the quotient by the $S^1$-action and then applying $\psi$ from Proposition \ref{funnyaction}. Taking sections and evaluating at $i$ gives rise to a square diagram of fibrations
$$
\xymatrix{
\Map(S^6,S^7) \ar[r]^{\varpi} \ar[d]^{\ev_i} &  \JJ(S^6) \ar[d]^{\ev_i} \\
S^7 \ar[r] & \CP^3
}
$$
where we have identified $J(\R^6)$ with $\CP^3$ (using Proposition \ref{csr6}).
This diagram can be completed to a $3\times 3$ diagram of fiber sequences 
\begin{equation}
\label{maind}
\xymatrix{
\Map_*(S^6,S^1) \ar[r] \ar[d] & \Omega^6 S^7 \ar[r]^{\varpi'} \ar[d] & \JJ_*(S^6) \ar[d] \\
\Map(S^6,S^1) \ar[r]^\iota \ar[d]^{\ev_i} & \Map(S^6,S^7) \ar[r]^{\varpi} \ar[d]^{\ev_i} &  \JJ(S^6) \ar[d]^{\ev_i} \\
S^1 \ar[r] & S^7 \ar[r] & \CP^3
}
\end{equation}
where $\JJ_*(S^6)=\{ J \in \JJ(S^6) \colon J(i)=i\}$, and $\Map_*(S^6,S^1)$ is the (contractible) space
of maps from $S^6$ to $S^1$ which send $i$ to $1$. Notice that $\JJ_*(S^6) \simeq \Omega^6 \CP^3 \simeq \Omega^6 S^7$. The fiber of $\varpi$ is the space of sections of a trivial $S^1$-bundle over $S^6$, and we have used the canonical trivialization
$$
(p, e^{i\theta}) \mapsto (p, \cos \theta + p\sin \theta) \in 
S^6 \times S^7
$$
to identify this space of sections with $\Map(S^6,S^1)$.


\begin{lemma}
\label{pi1}
Consider the map $\iota$ from diagram \eqref{maind}. The induced map of infinite cyclic groups 
$$\pi_1\Map(S^6,S^1) \xrightarrow{\iota_*} \pi_1\Map(S^6,S^7)$$ 
has as its image the subgroup of index $2$.
\end{lemma}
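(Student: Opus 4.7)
The plan is to identify both fundamental groups with $\Z$ and compute the image of the generator as a degree. Since $\Omega^6 S^1$ is contractible, the evaluation fibration gives $\Map(S^6, S^1) \simeq S^1$, so $\pi_1 \Map(S^6, S^1) \cong \Z$ is generated by the loop of constants $\gamma(\theta) = \mathrm{const}_{e^{i\theta}}$. The evaluation fibration $\Omega^6 S^7 \to \Map(S^6, S^7) \to S^7$ together with $\pi_1 S^7 = \pi_2 S^7 = 0$ (and $\pi_6 S^7 = 0$ for connectedness) yields $\pi_1 \Map(S^6, S^7) \cong \pi_7 S^7 \cong \Z$.

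Next, the image loop $\iota \circ \gamma$ sends $\theta$ to the map $p \mapsto \cos\theta + p\sin\theta$ and is based at $\mathrm{const}_1$, so adjoining produces a pointed map $\hat\iota \colon \Sigma S^6_+ \to S^7$. Since $\Sigma S^6_+ \simeq S^7 \vee S^1$ and $\pi_1 S^7 = 0$, the group $[\Sigma S^6_+, S^7]_* \cong \pi_7 S^7 \cong \Z$ is realised by the degree on $H^7$; as the quotient map $S^1 \times S^6 \to \Sigma S^6_+$ is an $H^7$-isomorphism, $\iota_*[\gamma]$ is simply the degree of $\hat\iota \colon S^1 \times S^6 \to S^7$, $(\theta, p) \mapsto \cos\theta + p\sin\theta$, viewed as a map of closed oriented $7$-manifolds.

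To compute this degree I would exploit the symmetry $\hat\iota(\theta, p) = \hat\iota(2\pi - \theta, -p)$. The involution $\sigma(\theta, p) = (2\pi - \theta, -p)$ on $S^1 \times S^6$ is free (since $p \neq -p$ on $S^6$) and orientation-preserving: each factor involution is orientation-reversing --- the antipodal map on $S^6$ has degree $(-1)^{6+1} = -1$ --- and the two signs multiply to $+1$. The restriction of $\hat\iota$ to the fundamental domain $(0, \pi) \times S^6$ is an orientation-preserving diffeomorphism onto $S^7 \setminus \{\pm 1\}$, so the descended map $(S^1 \times S^6)/\sigma \to S^7$ has degree $+1$ (the two $\RP^6$ fibres above $\theta \in \{0, \pi\}$ collapse to the poles $\pm 1$ but contribute nothing). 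Hence $\deg \hat\iota = 2$, so $\iota_*$ sends the generator to $\pm 2$ times the generator and its image is the index-$2$ subgroup.

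The main obstacle is the orientation bookkeeping: one must confirm that the two sheets of the generic $2$-to-$1$ cover contribute with the same sign so they add rather than cancel. This hinges on the antipodal map of the even-dimensional sphere $S^6$ being orientation-reversing, which makes $\sigma$ orientation-preserving overall.
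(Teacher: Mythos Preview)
Your proof is correct and follows the same overall strategy as the paper: both identify the image of the generator as the loop $\theta\mapsto(p\mapsto\cos\theta+p\sin\theta)$, pass to the adjoint, and reduce to a degree computation. The difference is in how that degree is found. The paper precomposes with the parametrization $q(\theta,p)=\cos(\theta/2)+p\sin(\theta/2)$ of $S^7$ and observes that the adjoint then becomes the octonionic squaring map $x\mapsto x^2$, which has degree $2$. You instead work directly on $S^1\times S^6$ and exploit the free orientation-preserving involution $\sigma(\theta,p)=(2\pi-\theta,-p)$ under which $\hat\iota$ is invariant, concluding degree $\pm 2$ from the fact that the quotient map to $S^7$ is generically a diffeomorphism. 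These are really two sides of the same coin: your involution corresponds exactly to $x\mapsto -x$ on $S^7$ under the paper's parametrization $q$, which is precisely the deck transformation of the squaring map. Your argument has the minor advantage of being self-contained---the paper tacitly assumes the reader knows the degree of the squaring map on $S^7$, which ultimately rests on the same parity consideration (the antipodal map on an odd-dimensional sphere preserves orientation) that you make explicit.
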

\begin{proof}
That both groups in the statement are infinite cyclic follows from the  long exact homotopy sequences of the first two columns of 
diagram \eqref{maind}.  
A generator of $\pi_1 \Map(S^6,S^1)$ is given by 
$$ 
e^{i\theta} \mapsto ( p \mapsto e^{i\theta} )
$$
and its image in $\pi_1 \Map(S^6, S^7)$ is given by 
\begin{equation}
\label{loop}
e^{i\theta} \mapsto \left( p \mapsto (\cos \theta + p \sin \theta)  \right).
\end{equation}

By adjunction, there is a canonical isomorphism 
\begin{equation}
\label{identif0}
\pi_1 \Map(S^6,S^7) \simeq \left[ \left( [0,2\pi] \times S^6,\{0,2\pi\} \times S^6 \right) , \left( S^7,1 \right) \right] 
\end{equation} 
of $\pi_1 \Map(S^6, S^7)$ with the group of relative homotopy classes from the cylinder relative to its boundary to the $H$-space $S^7$ (relative
to its basepoint $1$). Precomposition with the quotient map $ [0,2\pi] \times S^6 \xrightarrow{q} S^7$ given by 
$$(\theta, p) \mapsto \cos \frac \theta 2 + p \sin \frac \theta 2 $$
(which collapses the ends of the cylinder to $\pm 1$) gives a 
bijection
\begin{equation}
\label{identif}
[ ([0,2\pi] \times S^6,\{0,2\pi\} \times S^6) , (S^7,1) ]  
\xleftarrow{q^*} [(S^7, \{\pm 1\}), (S^7,1)] \cong 
[(S^7,1), (S^7,1)] = [S^7, S^7]_* 
\end{equation}
where the last isomorphism is a consequence of the identifications
$[(S^7, \{\pm 1\}), (S^7,1)] \cong [S^7\vee S^1, S^7]_* \cong [S^7, S^7]_*$. 

Under the identifications \eqref{identif0} and \eqref{identif}, the loop \eqref{loop} in $\Map(S^6,S^7)$ becomes the squaring map on $S^7$,
$$
\cos \frac \theta 2 + p \sin \frac \theta 2 \mapsto
\cos \theta + p \sin\theta .
$$
Therefore, a generator of $\pi_1 \Map(S^6, S^1)$ is sent to twice a generator of $\pi_1 \Map(S^6, S^7) \cong [S^7, S^7]$, which completes the proof.
\end{proof}

\begin{remark} We can also compute $\pi_1\JJ(S^6)$, and hence obtain an alternative proof of the above lemma, using the following result of Crabb and Sutherland \cite[Proposition 2.7 and Theorem 2.12]{CrSu84}:

\textit{Let $X$ be an oriented closed connected $2n$--manifold and $\xi$ a complex rank $n+1$ bundle over $X$. Denote by $N\xi$ the space of sections of the projective bundle $P\xi$ which lift to sections of $\xi$; this is a non-empty connected space. Then $\pi_1(N\xi)$ is a central extension} $$0 \to \Z/c_n(\xi)[X] \to \pi_1(N\xi) \to H^1(X;\Z) \to 0.$$ We note that our $\CP^3$ bundle $\CP^3 \to SO(7)/U(3) \to S^6$ is the projectivization of the rank 4 complex vector bundle of positive pure spinors $\slashed{S}^{+}$, see \cite[Proposition IV.9.8 and Remark IV.9.12]{LM}. Taking $\xi = \slashed{S}^{+}$, since $H^2(S^6;\ZZ) = 0$, every section of $P\xi$ lifts to a section of $\xi$, i.e. $N\xi = \JJ(S^6)$, and so by the above result and knowledge of the cohomology of $SO(7)/U(3)$ (see e.g. \cite[Proposition 3.2]{P91}) we conclude from the projective bundle formula that $\pi_1 \JJ(S^6) = \Z/c_3(\slashed{S}^{+})[S^6] = \Z/2$. Lemma \ref{pi1} follows from here.
\end{remark}

\begin{cor}
The homotopy groups of the space $\JJ(S^6)$ are given by
$$
\pi_k \JJ(S^6) = \begin{cases}
\Z/2 & \text{ if } k=1, \\
\pi_k(S^7) \oplus \pi_{k+6}(S^7) & \text{ otherwise.}
\end{cases}
$$
\end{cor}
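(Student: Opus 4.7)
The plan is to read off $\pi_k \JJ(S^6)$ from the long exact sequence of the middle row
$$ \Map(S^6,S^1) \xrightarrow{\iota} \Map(S^6,S^7) \xrightarrow{\varpi} \JJ(S^6) $$
of diagram \eqref{maind}, after identifying the homotopy of the fiber and of the middle term separately. Since $S^1 \simeq K(\Z,1)$, we have $\pi_k \Map(S^6,S^1) = H^{1-k}(S^6;\Z)$, which equals $\Z$ for $k=1$ and vanishes otherwise; in particular $\Map(S^6,S^1) \simeq S^1$.

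Next I would analyse the evaluation fibration $\Omega^6 S^7 \to \Map(S^6,S^7) \xrightarrow{\ev_i} S^7$. The constant-map inclusion $S^7 \hookrightarrow \Map(S^6,S^7)$ provides a section of $\ev_i$, so its long exact sequence splits into split short exact sequences and one obtains
$$ \pi_k \Map(S^6,S^7) \cong \pi_k(S^7) \oplus \pi_{k+6}(S^7) $$
for every $k \geq 1$.

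Feeding these inputs into the long exact sequence of the middle row and reading off the portion containing $\pi_k \JJ(S^6)$ finishes the computation. For $k \geq 3$ the two homotopy groups of the fiber adjacent to $\pi_k \JJ(S^6)$ vanish, yielding the desired isomorphism. For the remaining two cases I would invoke Lemma \ref{pi1}, which identifies $\iota_*\colon \Z \to \Z$ as multiplication by $\pm 2$; in particular $\iota_*$ is injective, so the connecting map $\pi_2 \JJ(S^6) \to \pi_1 \Map(S^6,S^1)$ vanishes, giving $\pi_2 \JJ(S^6) \cong \pi_2 \Map(S^6,S^7) = \pi_8(S^7) = \Z/2$, while the tail of the sequence yields $\pi_1 \JJ(S^6) = \coker \iota_* = \Z/2$. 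All the real content is already contained in Lemma \ref{pi1}; once the map $\iota_*$ is pinned down, the rest is routine bookkeeping with the long exact sequence, and I do not anticipate any further obstacle.
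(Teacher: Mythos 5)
Your argument is correct and is essentially the paper's own proof: both read off $\pi_k\JJ(S^6)$ from the long exact sequence of the middle row of \eqref{maind} together with Lemma \ref{pi1}, and both rest on the identification $\pi_k\Map(S^6,S^7)\cong\pi_k(S^7)\oplus\pi_{k+6}(S^7)$ --- the paper obtains it from the $H$-space splitting $\Map(S^6,S^7)\simeq S^7\times\Omega^6S^7$, while you obtain it from the constant-map section of the evaluation fibration, which is the same splitting in a marginally weaker guise. No gaps.
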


\begin{proof}
Since $S^7$ is an $H$-space we have $\Map(S^6,S^7) \simeq S^7 \times \Map_*(S^6,S^7) = S^7 \times \Omega^6 S^7$. 
The result follows from the long exact sequence in the middle row of \eqref{maind} and Lemma \ref{pi1}.
\end{proof}

Since the map $\Map(S^6, S^7) \xrightarrow{\varpi} \JJ(S^6)$ in diagram \eqref{maind} factors through 
$\Map(S^6, \RP^7)$ (as $-1$ is in the center of $\OO$), the constant sections of $S^6 \times S^7 
\xrightarrow{\pi_1} S^6$ give rise to an inclusion 
\begin{equation}
\label{incrp7}
\RP^7 \stackrel{j}{\hookrightarrow} \JJ(S^6).
\end{equation}
The image of this map is what Battaglia \cite{Ba} calls the space of linear almost complex structures 
on $S^6$ and, as we explain in Proposition \ref{identRP7} below, equals the subspace $SO(7)/G_2$ mentioned in 
the introduction.

\begin{thm}
\label{main}
The inclusion $j$ in \eqref{incrp7} sits in a homotopy fiber sequence 
$$
\Omega^7_0S^7   \to \RP^7 \xrightarrow{j} \JJ(S^6),
$$ 
where $\Omega^7_0S^7$ denotes a path component of the sevenfold based loop space of $S^7$.
\end{thm}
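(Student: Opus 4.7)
The plan is to realize $\text{hfib}(j)$ as the homotopy fiber of an explicit map $S^1 \to \Omega^6 S^7$ by comparing two fibrations over $\CP^3$. Let $e = \ev_i : \JJ(S^6) \to \CP^3$ be evaluation at $i$ from diagram \eqref{maind}, with fiber $\JJ_*(S^6) \simeq \Omega^6 S^7$, and consider the composite $\pi := e \circ j : \RP^7 \to \CP^3$, $[x] \mapsto i^{R_{\bar x}}$. By Proposition \ref{csr6}, $\pi$ is precisely the principal $S^1$-bundle $\RP^7 = S^7/\{\pm 1\} \to S^7/S^1 = \CP^3$ obtained from the Hopf fibration by halving the $S^1$-fiber; in particular its fiber is a copy of $S^1 \subset \RP^7$. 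The commutative diagram of fiber sequences
$$\xymatrix{
S^1 \ar[r] \ar[d]^-\psi & \RP^7 \ar[r]^-\pi \ar[d]^j & \CP^3 \ar@{=}[d] \\
\JJ_*(S^6) \ar[r] & \JJ(S^6) \ar[r]^-e & \CP^3
}$$
defines a map $\psi : S^1 \to \JJ_*(S^6)$ on fibers. Applying the general fact that homotopy fibers of a composition $\pi = e \circ j$ fit into a fiber sequence $\text{hfib}(j) \to \text{hfib}(\pi) \to \text{hfib}(e)$ yields
$$\text{hfib}(j) \longrightarrow S^1 \xrightarrow{\psi} \Omega^6 S^7.$$

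The key step will be to prove $\psi$ induces $\pm 1$ on $\pi_1$. By naturality, the connecting homomorphisms $\partial : \pi_2 \CP^3 = \Z \to \pi_1(\text{fiber})$ of the two horizontal fiber sequences satisfy $\psi_* \circ \partial_{\mathrm{top}} = \partial_{\mathrm{bot}}$. For the top row, $\pi_2 \RP^7 = \pi_2 S^7 = 0$ together with $\pi_1 \RP^7 = \Z/2$ force $\partial_{\mathrm{top}}$ to be multiplication by $\pm 2$. For the bottom row, $\pi_2 \JJ(S^6) = \Z/2$ (from the corollary above) is torsion and so maps to zero in $\pi_2 \CP^3 = \Z$; combined with $\pi_1 \JJ(S^6) = \Z/2$ from the remark following Lemma \ref{pi1}, the same argument gives $\partial_{\mathrm{bot}} = \pm 2$. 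Commutativity $\psi_* \cdot 2 = \pm 2$ in $\Z$ then forces $\psi_* = \pm 1$.

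With $\psi_* = \pm 1$ on $\pi_1$ and $\pi_n S^1 = 0$ for $n \geq 2$, the long exact sequence of $\text{hfib}(j) \to S^1 \to \Omega^6 S^7$ gives $\pi_0 \text{hfib}(j) = 0$ and $\pi_n \text{hfib}(j) \cong \pi_{n+7} S^7$ for $n \geq 1$, precisely the homotopy groups of $\Omega^7_0 S^7$. The Puppe extension provides a natural map $\Omega^7 S^7 \to \text{hfib}(j)$ that is an isomorphism on $\pi_n$ for $n \geq 1$, so its restriction to the identity component is a weak equivalence $\Omega^7_0 S^7 \xrightarrow{\simeq} \text{hfib}(j)$, exhibiting the desired fiber sequence. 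The main obstacle is the identification of $\psi_*$ on $\pi_1$ in the second step; once that is in hand, everything else reduces to routine bookkeeping with long exact sequences.
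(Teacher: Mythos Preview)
Your argument is correct and follows the same overall architecture as the paper: both set up the comparison of the fibrations $S^1 \to \RP^7 \to \CP^3$ and $\JJ_*(S^6) \to \JJ(S^6) \to \CP^3$, reduce the problem to showing that the induced map $\psi \colon S^1 \to \JJ_*(S^6)\simeq \Omega^6 S^7$ hits a generator of $\pi_1$, and then read off $\text{hfib}(j)\simeq \Omega^7_0 S^7$ from a Puppe/$3\times 3$ argument. The genuine difference is in how $\psi_*=\pm 1$ is established. The paper writes down the loop $\gamma(t)=\bigl(p\mapsto p^{R_{\cos\pi t - i\sin\pi t}}\bigr)$ explicitly, lifts it along $\varpi'$ to $\Omega^6 S^7$, and computes the degree of the adjoint self-map of $S^7$ directly from octonion multiplication, obtaining $-1$. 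You instead exploit the already-computed values $\pi_1\JJ(S^6)=\Z/2$ and $\pi_2\JJ(S^6)=\Z/2$ (from Lemma~\ref{pi1}, the subsequent Remark, and the Corollary) to force both connecting maps $\pi_2\CP^3\to\pi_1(\text{fiber})$ to be $\pm 2$, whence $\psi_*=\pm 1$ by naturality. Your route is slicker and avoids any octonion manipulation, at the cost of depending on those earlier computations; the paper's route is self-contained and gives the actual sign. Either way the conclusion is the same, and your Puppe-sequence endgame is equivalent to the paper's second $3\times 3$ diagram identifying $G$ with $\Omega^7_0 S^7$.
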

\begin{proof}
Proposition \ref{csr6} (1) identifies the map $\RP^7 \xrightarrow{\ev_i \circ j} \CP^3$ with the Hopf
fibration $\RP^1 \to \RP^7 \to \CP^3$. 

The map of fibrations 
$$
\xymatrix{
\RP^7\ar[r] \ar[d]^{\ev_i \circ j} \ar[r]^j &  \JJ(S^6) \ar[d]^{\ev_i} \\
\CP^3 \ar[r]^= & \CP^3
}
$$
can be extended (see \cite[Section 3.2]{N10}) to a $3\times 3$ diagram of homotopy fiber sequences
\begin{equation}
\label{main3x3}
\xymatrix{
G \ar[r] \ar[d] & \RP^1 \ar[r] \ar[d] & \JJ_*(S^6) \ar[d] \\
F \ar[r] \ar[d] & \RP^7\ar[r]^j \ar[d]^{\ev_i \circ j} &  \JJ(S^6) \ar[d]^{\ev_i} \\
\ast \ar[r] & \CP^3 \ar[r]^= & \CP^3
}   
\end{equation}
We will first prove that it suffices to show that $\RP^1 \to \JJ_*(S^6)$ sends a generator of 
$\pi_1 \RP^1$ to a generator of $\pi_1 \JJ_*(S^6) \cong \pi_1\Omega^6S^7 \cong \Z$. Indeed,
if $g\colon S^1 \to \Omega^6 S^7$ generates $\pi_1$ we may extend the homotopy commutative square
$$
\begin{tikzcd}
 S^1 \arrow[r,"g"] \arrow[d,"="] & \Omega^6 S^7 \arrow[d] \\
S^1 \arrow[r,"="] & S^1
\end{tikzcd}
$$
(where the right vertical arrow classifies the generator of $H^1(\Omega^6 S^7)=\Z$) to a $3\times 3$ square of homotopy
fiber sequences. It is then clear that the space $G$ in \eqref{main3x3} is homotopy equivalent to  $\Omega^7_0 S^7$, from which it follows that the same is true of $F$.

The image of a generator of 
$\pi_1(\RP^1)$ under the map $\RP^1 \to \JJ_*(S^6)$ 
is represented by the loop
$\gamma \colon [0,1] \to \JJ_*(S^6)$ given by
$$
t \mapsto ( p \mapsto p^{R_{\cos \pi t -i\sin \pi t}} ).
$$
A lift $\tilde \gamma$ of this loop to $\Omega^6 S^7$ along
the map $\varpi'$ in \eqref{maind} is given by the formula
$$
t \mapsto (
p \mapsto (\cos \pi t - p \sin \pi t) (\cos \pi t + i \sin \pi t) ).
$$
The image of $\tilde \gamma$ under the canonical identifications $\pi_1\Omega^6 S^7=\pi_0\Omega^7 S^7=[S^7, S^7]$
is the product of the homotopy classes of the maps 
$$ \cos \pi t + p \sin \pi t \mapsto \cos \pi t - p\sin \pi t \quad \text{ and } \quad
\cos \pi t + p \sin \pi t \mapsto \cos \pi t + i\sin \pi t
$$
using the $H$-space structure on $S^7$.  The first map has degree $-1$, while the second one has degree 0. This completes the proof. 
\end{proof}

\begin{cor} The inclusion $\RP^7 \stackrel{j}{\hookrightarrow} \JJ(S^6)$ is an isomorphism on fundamental groups and rational homotopy groups. \end{cor}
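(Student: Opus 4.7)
The plan is to extract both assertions from the long exact sequence of homotopy groups associated to the fiber sequence
$$\Omega^7_0 S^7 \to \RP^7 \xrightarrow{j} \JJ(S^6)$$
established in Theorem \ref{main}. The essential input is that $\pi_k\Omega^7_0 S^7 \cong \pi_{k+7}S^7$ is a finite group for every $k \geq 1$, while $\pi_0 \Omega^7_0 S^7 = 0$ since we are working with a single path component.

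For the rational statement, I would tensor the long exact sequence with $\Q$. The relevant segment
$$\pi_k \Omega^7_0 S^7 \otimes \Q \to \pi_k \RP^7 \otimes \Q \xrightarrow{j_*} \pi_k \JJ(S^6) \otimes \Q \to \pi_{k-1} \Omega^7_0 S^7 \otimes \Q$$
then has both outer terms equal to zero for all $k \geq 1$, so $j_*$ is a rational isomorphism in every positive degree.

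For the fundamental group assertion, I would read off the tail
$$\pi_1 \Omega^7_0 S^7 \to \pi_1 \RP^7 \xrightarrow{j_*} \pi_1 \JJ(S^6) \to \pi_0 \Omega^7_0 S^7 = 0$$
and combine it with the identification $\pi_1 \JJ(S^6) \cong \Z/2$ from the preceding corollary and the classical $\pi_1 \RP^7 \cong \Z/2$. Surjectivity of $j_*$, together with the equality of orders, forces $j_*$ to be an isomorphism.

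I do not anticipate any serious obstacle: all the real content sits in Theorem \ref{main} and the prior computation of $\pi_1 \JJ(S^6)$, and what remains is a routine bookkeeping exercise with the long exact sequence plus the classical fact that the higher homotopy groups of $S^7$ are finite.
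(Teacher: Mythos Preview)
Your argument is correct and is exactly the approach the paper intends: the corollary is stated without proof immediately after Theorem~\ref{main}, and your long exact sequence argument, together with Serre's finiteness of $\pi_{>7}S^7$ and the prior computation $\pi_1\JJ(S^6)\cong\Z/2$, is precisely what makes it an immediate consequence.
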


\section{Two $\RP^7$s mapping to $\JJ(S^6)$}
\label{rp7}

We will now explain why the subspace $j(\RP^7) \subset \JJ(S^6)$ in Theorem \ref{main}, coming 
from the constant sections of the fibration $S^6 \times S^7 \to S^6$ agrees with the 
orbit $SO(7)/G_2$ of the natural $SO(7)$-action on the canonical element of $\JJ(S^6)$.
This is also explained in \cite[Lemma 2.9, Corollary 2.11, and Proposition 3.5]{Ba} but we include an argument for completeness.

Recall from \cite[Section 8.2]{CS} that $\Spin(8)$ is the group of orthogonal isotopies of the octonions
$$
\Spin(8) = \{ (\phi,\psi,\lambda) \in SO(8)^3 \colon \phi(x)\psi(y)=\lambda(xy) \text{ for all } x,y \in \OO \}.
$$
Each coordinate in a triple $(\phi,\psi,\lambda)$ determines the other two up to a global sign \cite[Theorem 8.3]{CS}. 
The projections $\Spin(8) \to SO(8)$ onto the first two coordinates are the spin representations of $\Spin(8)$ 
while the projection onto the third coordinate is the natural projection to $SO(8)$.

Given $\lambda \in SO(8)$, it is proved in \cite[Theorem 8.11]{CS} that there are
unit octonions $a_\lambda,b_\lambda \in S^7$, which are unique up to sign, called \textit{companions} of $\lambda$, such that the elements $(\phi,\psi,\lambda) \in \Spin(8)$
satisfy
$$
\phi(x) = \lambda(x)a_\lambda, \quad \psi(y) = b_\lambda \lambda(y) \quad \text{ for all } x,y \in \OO.
$$
Note that the classes of the companions in $\RP^7$ can be regarded as the ``difference'' between the $SO(8)$ and the spin representations of $\Spin(8)$ and therefore vary smoothly with $\lambda$. 

Let $SO(7)= \{\lambda \in SO(8) \colon \lambda(1)=1\}$. Then for $(\phi,\psi,\lambda) \in \Spin(8)$ with $\lambda \in SO(7)$ 
we have  $\phi(x) \psi(\overline x) = 1$ for all $x \in \OO$, and hence
$$
\psi(x)= \overline{ \phi(\overline x)}, \quad b_\lambda= \overline a_\lambda .
$$

\begin{prop}
\label{identRP7}
Let $J^{\textrm{cn}}$ denote the canonical almost complex structure from the introduction, and let $j$ denote
the inclusion in \eqref{incrp7}. Then $SO(7)\cdot J^{\textrm{cn}} = j(\RP^7)$.
\end{prop}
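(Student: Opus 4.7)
The plan is to establish the stronger identity $A \cdot J^{\textrm{cn}} = j([a_A])$ for each $A \in SO(7)$, where $a_A \in S^7$ denotes the triality companion introduced before the proposition.

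I first unpack $(A \cdot J^{\textrm{cn}})_p(v) = A\bigl((A^{-1}p)(A^{-1}v)\bigr)$ from the definitions of the action and of $J^{\textrm{cn}}$. Lifting $A$ via triality to a triple $(\phi,\psi,A) \in \Spin(8)$ with $\phi(x) = A(x) a_A$ and $\psi(y) = \overline{a_A} A(y)$ (valid because $A \in SO(7)$), I apply the defining identity $\phi(x)\psi(y) = A(xy)$ with $x = A^{-1}p$ and $y = A^{-1}v$ to obtain
$$(A \cdot J^{\textrm{cn}})_p(v) = (p\,a_A)(\overline{a_A}\,v).$$

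The crux of the proof will be the octonion identity
$$(p\,a)(\overline a\,v) = \bigl(p(va)\bigr)\overline a \qquad\text{for all } a \in S^7,\ p, v \in \OO,$$
whose right-hand side is precisely the formula for $j([a])_p(v)$ coming from \eqref{incrp7}. I plan to prove it by fixing a quaternion subalgebra $\HH \subset \OO$ containing $a$, picking a unit $e \in \HH^\perp$, and writing $p = p_1 + e p_2$, $v = v_1 + e v_2$ with $p_i, v_i \in \HH$; upon expanding both sides via the Cayley--Dickson formula \eqref{CD} and using $a \overline a = 1$ together with associativity inside $\HH$, each side collapses to
$$(p_1 v_1 - a v_2 \overline{p_2}\,\overline a) + e(\overline a v_1 a p_2 + \overline a \overline{p_1} a v_2).$$
Combined with the triality computation, this gives $A \cdot J^{\textrm{cn}} = j([a_A])$, and hence $SO(7) \cdot J^{\textrm{cn}} \subseteq j(\RP^7)$.

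For the reverse inclusion, both the orbit $SO(7) \cdot J^{\textrm{cn}} \cong SO(7)/G_2$ and the image $j(\RP^7)$ are compact connected manifolds homeomorphic to $\RP^7$: for the former, via the introduction's identification; for the latter, via injectivity of $j$, which follows from Proposition~\ref{csr6} applied at two distinct basepoints of $S^6$ (forcing any two octonions with $j([x]) = j([y])$ to differ by a scalar in $\langle 1,p\rangle \cap \langle 1, p'\rangle = \R$). The containment $SO(7) \cdot J^{\textrm{cn}} \subseteq j(\RP^7)$ is therefore an embedding of one copy of $\RP^7$ into another of the same dimension, forcing equality by invariance of domain and compactness. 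The most delicate step in this plan is the octonion identity above; I expect its Cayley--Dickson verification to be essentially routine once the quaternion subalgebra containing $a$ has been fixed.
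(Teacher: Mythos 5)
Your proof is correct, and its first half follows the paper's own route: both reduce the containment $SO(7)\cdot J^{\textrm{cn}}\subseteq j(\RP^7)$ to the companion computation $(A\cdot J^{\textrm{cn}})_p(v)=(p\,a_A)(\overline{a_A}\,v)$ and then to the octonion identity $(pa)(\overline{a}v)=(p(va))\overline{a}$. Where you verify that identity by a Cayley--Dickson expansion relative to a quaternion subalgebra containing $a$ (your claimed common value does check out against \eqref{CD}), the paper instead inserts $v=(va)\overline{a}$ and applies the Moufang laws --- shorter, but relying on more octonion machinery; both are fine. The genuine divergence is in the reverse inclusion. The paper makes the containment explicitly surjective: the companion satisfies $a_\lambda=\pm\phi(1)$ where $(\phi,\psi,\lambda)\mapsto\phi$ restricts to the spin representation of $\Spin(7)$, and since that representation acts transitively on $S^7$, every class $[a]\in\RP^7$ is realized as $[a_\lambda]$. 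You instead argue topologically: the orbit is a compact connected $7$--manifold ($\cong SO(7)/G_2$, using the identification of the isotropy group with $G_2$ from the introduction) sitting inside $j(\RP^7)\cong\RP^7$, hence open by invariance of domain, closed by compactness, and therefore everything by connectedness; your verification that $j$ is injective via Proposition \ref{csr6} at two non-antipodal basepoints is a correct and necessary ingredient for this. Your route avoids knowing anything about the transitivity of the $\Spin(7)$ spin representation, at the price of invoking invariance of domain and the isotropy computation; the paper's route is more explicit, producing for each $[a]$ a $\lambda\in SO(7)$ with $\lambda\cdot J^{\textrm{cn}}=j([a])$.
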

\begin{proof}
Given $\lambda \in SO(7)$, $p \in S^6$, $v\in \OO$ and writing $a_\lambda$ for a companion of $\lambda$, we have 
\begin{eqnarray*}
\lambda(\lambda^{-1}(p)\lambda^{-1}(v)) & = & (\lambda(\lambda^{-1}(p)) a_{\lambda} ) (\overline{ a_{\lambda}}\lambda(\lambda^{-1}(v))) \\
& = & (p a_{\lambda} )(\overline{ a_{\lambda} } v)  \\
& = & (p a_{\lambda}) ( \overline{a_{\lambda}} (v a_{\lambda}) \overline{a_{\lambda}} ) \\
& = & (p(v a_{\lambda} ) )\overline{a_{\lambda}}
\end{eqnarray*}
where the last two equalities follow from the Moufang laws \cite[Section 7.4]{CS}.

Hence $\lambda \cdot J^{\textrm{cn}} = j([a_{\lambda}])$ and we have $SO(7)\cdot J^{\textrm{cn}} \subset j(\RP^7)$. The subgroup
$\{ (\phi, \psi, \lambda) \in \Spin(8) \colon \lambda \in SO(7)\}$ is $\Spin(7)$ and the projection 
$$ (\phi,\psi,\lambda) \mapsto \phi $$
is the restriction of a spin representation of $\Spin(8)$ to $\Spin(7)$.
This is the spin representation of 
$\Spin(7)$. Evaluating at $1$ we see that 
$$ \phi(1)= \pm a_\lambda. $$
Since the spin representation of $\Spin(7)$ acts transitively on $S^7$ 
the proof is complete.
\end{proof}

We have been considering the inclusion $SO(7)/G_2 \cong \RP^7 \hookrightarrow \JJ(S^6)$. 
We can also consider the map $\RP^7 \to \JJ(S^6)$ given by the composite
\begin{equation}
\label{other}
S^7 / \mathbb{Z}_2 = \RP^7 \xrightarrow{c} SO(7)  \to SO(7)/G_2  \hookrightarrow \JJ(S^6)
\end{equation}
where the map $c$ takes (the equivalence class of) a unit octonion to its action on the imaginary octonions by conjugation. 

\begin{prop} The composite $\RP^7 \xrightarrow{c} SO(7) \to SO(7)/G_2$ is a degree three self-map of $\RP^7$. Hence the composite $\RP^7 \hookrightarrow \JJ(S^6)$ in \eqref{other} also induces an isomorphism on fundamental groups and on rational homotopy groups. \end{prop}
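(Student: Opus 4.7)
The plan is to use Proposition \ref{identRP7}, under which the identification $SO(7)/G_2 \cong \RP^7$ is given by $\lambda G_2 \mapsto [a_\lambda]$, with $a_\lambda$ denoting a companion of $\lambda$. Under this identification the composite in the statement becomes the self-map $[x] \mapsto [a_{c_x}]$ of $\RP^7$, so the first and main step is to compute the companion $a_{c_x}$ of the conjugation $c_x$, and to show that $a_{c_x} = \pm x^3$.

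To identify the companion I would exhibit the explicit element
$$(\phi, \psi, c_x) \;=\; (L_x R_{x^2},\; L_{\bar x^2} R_{\bar x},\; c_x) \;\in\; \Spin(8),$$
for which $\phi(1) = x(1 \cdot x^2) = x^3$. The main obstacle is verifying the defining condition $\phi(v)\psi(w) = c_x(vw)$, i.e.
$$x(v x^2) \cdot \bar x^2(w \bar x) \;=\; x(vw) \bar x \qquad \text{for all } v, w \in \OO.$$
A clean route is to pick a quaternion subalgebra $\HH \subset \OO$ containing $x$ (any unit octonion lies in such a subalgebra), decompose $v = v_1 + I v_2$ and $w = w_1 + I w_2$ via Cayley--Dickson with a unit $I \perp \HH$, and compare both sides component by component using quaternion associativity (all $x$-involving products now take place in $\HH$). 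Equivalently one can argue directly from the Moufang identity $(xa)(bx) = x(ab)x$ combined with Artin's theorem that any two-generated subalgebra of $\OO$ is associative. The latter also justifies the simplifications $(xv\bar x)x^3 = x(vx^2)$ and $\bar x^3(xw\bar x) = \bar x^2(w\bar x)$ needed to recognize $\phi$ and $\psi$ as $L_x R_{x^2}$ and $L_{\bar x^2} R_{\bar x}$ in the first place.

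Once $a_{c_x} = \pm x^3$ is established, the composite is the self-map $[x] \mapsto [x^3]$ of $\RP^7$. Its lift to the double cover $S^7$ is the cubing map $x \mapsto x^3$ in octonion multiplication (unambiguous by alternativity). Since $S^7$ is an $H$-space, the $n$th power map on $S^7$ has degree $n$ by the standard argument that $\Delta_*[S^7] = 1 \otimes 1 + 1 \otimes 1$ in $H_7(S^7 \times S^7)$ and $H$-multiplication adds on $H_7$; in particular the cubing map has degree $3$. Since lifting a self-map of $\RP^7$ along the double cover $S^7 \to \RP^7$ preserves degree, the self-map $[x] \mapsto [x^3]$ of $\RP^7$ also has degree $3$.

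For the final clause, a degree $3$ self-map of $\RP^7$ induces an isomorphism on $\pi_1 = \Z/2$ (since $3$ is odd) and on rational homotopy groups, which for $\RP^7$ are concentrated in degree $7$ where the map acts by multiplication by $3$. Combined with the corollary to Theorem \ref{main} together with Proposition \ref{identRP7}, which show that the inclusion $SO(7)/G_2 \hookrightarrow \JJ(S^6)$ induces isomorphisms on $\pi_1$ and on $\pi_* \otimes \Q$, the composite \eqref{other} inherits the same property.
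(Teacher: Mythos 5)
Your argument is correct, but it reaches the key computation by a genuinely different route from the paper. The paper computes the action of $c(x)$ on $J^{\textrm{cn}}$ pointwise: for each $p$ it picks a quaternion subalgebra $A$ containing $p$ and $x$, splits $\OO = A \oplus IA$, and pushes $v = a + Ib$ through the Cayley--Dickson formula to get $(c(x)\cdot J^{\textrm{cn}})_p = p^{R_{x^3}}$, justifying the degree by observing that the map is generically $3$-to-$1$. You instead stay entirely inside the triality formalism: you identify the companion of $c_x \in SO(7)$ by exhibiting the triple $(L_x R_{x^2},\, L_{\overline{x}^2} R_{\overline{x}},\, c_x) \in \Spin(8)$ (writing $L_y$ for left multiplication) and then invoke Proposition \ref{identRP7} in the form $\lambda \cdot J^{\textrm{cn}} = j([a_\lambda])$ to conclude the composite is $[x] \mapsto [x^3]$. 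Your triple is correct --- it is the product of the Moufang triple $(L_xR_x,\, L_{\overline{x}},\, L_x)$ with the inverse of $(R_{\overline{x}},\, L_xR_x,\, R_x)$, both instances of the Moufang laws already used in the paper --- so the verification you leave as an outline does go through, and your warning that Artin's theorem alone cannot handle the three-variable identity is well taken; either of your two suggested methods works. What your route buys is a one-shot identification of the companion and a clean reuse of Proposition \ref{identRP7}; what the paper's route buys is a self-contained computation that never has to guess a $\Spin(8)$ element. Your degree argument (the cubing map on the $H$-space $S^7$ has degree $3$ by the power-map argument, and degree is preserved under the double cover $S^7 \to \RP^7$) is if anything more careful than the paper's ``generically $3$-to-$1$'' remark. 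The only loose end is a possible conjugation in the companion ($x^3$ versus $\overline{x}^3$, depending on which of the two companions and which spin representation one normalizes by); this changes the degree only by a sign and affects neither the $\pi_1$ nor the rational homotopy conclusion.
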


\begin{proof} Given a unit octonion $x$ and $p \in S^6$, let $A$ be a quaternion algebra containing $p,x$ with standard basis 
$\langle 1, p, \jj, p\jj \rangle$ and decompose $\R^8 \cong \OO$ as $A \oplus IA$. We will compute the action of 
$c(x)$ on the canonical  complex structure $J^{\textrm{cn}}$ in these coordinates. 
$$
(c(x) \cdot J^{\textrm{cn}})_p(v) = \overline x((xp\overline x) (xv\overline x)) x
$$
If $v \in A$, then associativity implies that $(c(x)\cdot J^{\textrm{cn}})_p(v)=pv$. Suppose now that $v=Iw$ with $w\in A$. Then, using $\eqref{CD}$ we obtain
\begin{eqnarray*}
(c(x) \cdot J^{\textrm{cn}})_p(v) & =  & x((\overline x p x) (\overline x(Iw) x)) \overline x \\
& = &   x((\overline xp x) ((I( x w)) x)) \overline x  \\
& = &   x((\overline xp x) (I( x^2w))) \overline x  \\
& = &   x( I((\overline x \, \overline p x)( x^2w)))\overline x  \\
& = &  x ( I (\overline x \, \overline p x^3w)) \overline x \\
& = & I ((\overline x^3 \overline p  x^3) w).
\end{eqnarray*}
We conclude that, with the notation of Proposition \ref{csr6}, we have 
$$ 
(c(x) \cdot J^{\textrm{cn}})_p = p^{R_{x^3}}.
$$
Thus the image of the map \eqref{other} equals $j(\RP^7)$ for the inclusion $j$ in \eqref{incrp7},  but \eqref{other} is generically a 3-to-1 map. Given unit octonions
$x,y$, the elements $[x],[y] \in \RP^7$ have the same image under
\eqref{other} if and only if $x^6=y^6$. \end{proof}

\section{The space of almost complex structures with $c_1=0$ on a six manifold}

We now consider the more general setting of the space of almost complex structures on a six--manifold homotopic to a given $J$ with $c_1(J) = 0$. As before, all almost complex structures will be assumed to be orthogonal with respect to some fixed background metric and compatible with a fixed orientation.


\begin{prop}
\label{gen}
Let $M$ be a six--manifold with an almost complex structure $J$ so that $c_1(J)=0$. Let $f\colon M \to BSU(3)$ be a 
lift\footnote{ Note that this lift is unique up to homotopy through sections, as the homotopy fiber $S^1 \to BSU(3)$ of the map $BSU(3) \to BU(3)$ is null.} of the classifying map of $TM$ determined by $J$. Let $\JJ(M)$ denote the path component of $J$ in the space of
almost complex structures on $M$ and 
$$
S^7 \to E \xrightarrow{q} M
$$
be the $S^7$-bundle on $M$ classified by the composite
$$ 
M \xrightarrow{f} BSU(3) \to BG_2
$$
(where we consider the standard action of $G_2$ on the unit octonions). 
Then there is a fiber sequence 
$$
\Map(M,S^1) \to \Gamma(q) \to \JJ(M).
$$

where $\Gamma(q)$ denotes the space of sections of the bundle $q$. 
\end{prop}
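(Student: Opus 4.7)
The plan is to generalize the construction of Proposition \ref{funnyaction} and the middle row of diagram \eqref{maind} from $S^6$ to an arbitrary six--manifold $M$ carrying an almost complex structure $J$ with $c_1(J)=0$. The key geometric step is to realize the twistor space $Z(M)$ as a fiberwise quotient $E/S^1$ of the $S^7$--bundle $q \colon E \to M$, mirroring the identification $Z(S^6) = (S^6 \times S^7)/S^1$; the statement will then follow by passing to sections exactly as in the $S^6$ case.

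Let $P \to M$ denote the principal $SU(3)$--bundle classified by $f$, so that $E \cong P \times_{SU(3)} S^7$ with $SU(3) \subset G_2$ acting on $S^7 \subset \OO$ through the standard $G_2$--action. Since $SU(3)$ is precisely the stabilizer in $G_2$ of the imaginary octonion $i$, the $S^1 = \{e^{i\theta}\} \subset \OO$--action on $S^7$ by left octonion multiplication commutes with the $SU(3)$--action, and hence descends to a free, fiberwise $S^1$--action on $E$. I would then invoke Proposition \ref{csr6} fiberwise to exhibit a bundle isomorphism $E/S^1 \cong Z(M)$: the $SU(3)$--reduction identifies $T_pM$ with $\langle 1,i \rangle^\perp \subset \OO_p$, and the parametrization $[x] \mapsto i^{R_{\overline x}}$ from Proposition \ref{csr6} then assembles into a diffeomorphism of bundles over $M$.

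Granted this, the $G_2$--fixed vector $1 \in S^7$ supplies a canonical section $\tilde\sigma \in \Gamma(q)$, and its image under the principal $S^1$--bundle projection $\pi \colon E \to Z(M)$ is the section $\sigma_J$ of $Z(M)$ corresponding to $J$. Because $\pi_k(S^7) = 0$ for $k \leq 6$, relative obstruction theory shows that $\Gamma(q)$ is non--empty and path--connected, so the map $\Gamma(q) \to \Gamma(Z(M))$ takes values in the path component $\JJ(M)$ of $\sigma_J$. For each $\sigma \in \JJ(M)$, the fiber of this map over $\sigma$ is the space of sections of the pulled--back principal $S^1$--bundle $\sigma^*\pi$ over $M$; this pullback is isomorphic, by homotopy invariance applied to any path connecting $\sigma$ to $\sigma_J$ in $\JJ(M)$, to $\sigma_J^* \pi$, which is trivialized by $\tilde\sigma$. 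The fiber is therefore the space of sections of a trivial principal $S^1$--bundle over $M$, namely $\Map(M, S^1)$, yielding the claimed homotopy fiber sequence.

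The main obstacle I anticipate is verifying the bundle isomorphism $E/S^1 \cong Z(M)$ carefully: one needs to check that the pointwise description of $J(T_pM)$ given by Proposition \ref{csr6} is natural with respect to the $SU(3)$--structure, so that the local identifications patch together into a global diffeomorphism compatible with the projections to $M$. Once this is established, the remainder of the argument reduces to the standard ``section space of a principal bundle'' formalism together with a connectedness count via obstruction theory.
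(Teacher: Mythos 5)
Your proposal is correct and follows essentially the same route as the paper: both endow $TM\oplus\R^2$ with the structure of a bundle of octonion algebras via the composite $M\to BSU(3)\to BG_2$, apply Proposition \ref{csr6} fiberwise to identify the twistor space with $E/S^1$ for the free fiberwise $S^1$-action by left multiplication by $\cos\theta+i\sin\theta$, and then pass to section spaces. You spell out more of the final bookkeeping (the $SU(3)$-equivariance of the parametrization $x\mapsto i^{R_{\overline x}}$, the connectivity of $\Gamma(q)$ via obstruction theory, and the triviality of the pulled-back circle bundle) than the paper, which compresses this into ``the statement follows,'' but the argument is the same.
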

\begin{proof}
The map $M \to BG_2$ gives $TM \oplus \R^2$  the structure of a bundle 
of normed division algebras on $M$. Writing $1,i$ for the
sections corresponding to the last two coordinates, we have that $1$ is the unit on each fiber while $i$ acts as 
multiplication by $J$ on $TM$.

Proposition \ref{csr6} then expresses the bundle of orthogonal almost complex structures on $M$ (compatible 
with the orientation) as the quotient of $E$ by the $S^1$ action given by fiberwise multiplication by $\cos \theta + i \sin 
\theta$. The statement follows. 
\end{proof}

\begin{remark}
Note that $\Gamma(q)$ is an $H$-space. If $M$ is simply connected then $\Map(M,S^1) \simeq S^1$ and so Proposition 
\ref{gen} shows that each component of the space of almost complex structures on $M$ with $c_1=0$ 
is homotopy equivalent to the quotient of an $H$-space by an $S^1$-action. The space $\Gamma(q)$ is the space of sections of the sphere bundle of $TM \oplus \R^2$, and is therefore independent of the choice of almost complex structure. On the other hand, its $H$-space structure (and the corresponding $S^1$-action) will depend on the path component of the almost complex structure.
\end{remark}

If in addition $c_2(J)=0$, then the bundle $q$ in Proposition \ref{gen} 
is trivial as the next lemma shows. In that case we have in the previous 
statement $\Gamma(q) = \Map(M,S^7)$.

\begin{lemma}
\label{c2}
Let $X$ be a six--dimensional cell complex. The composite 
$$
X \xrightarrow{f} BSU(3) \to BG_2
$$
is null if and only if $c_2(f)=0$. 
\end{lemma}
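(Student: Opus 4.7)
The plan is to exploit that $BG_2$ is indistinguishable from $K(\Z,4)$ through dimension $6$, so that nullhomotopy of a map from a six-dimensional CW complex into $BG_2$ is detected by a single $H^4$-class, which I will identify with $c_2$.

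First, I would compute $\pi_k G_2$ for $k \le 5$ by running the long exact sequence of the principal fibration $SU(3) \to G_2 \to S^6$. With the classical values $\pi_{\le 5}SU(3) = (0,0,0,\Z,0,\Z)$ and the only nonzero $\pi_{\le 6}S^6$ being $\pi_6(S^6)=\Z$, one reads off $\pi_i G_2 = 0$ for $i \in \{0,1,2,4\}$, $\pi_3 G_2 = \Z$, and
$$ \pi_5 G_2 = \coker\!\bigl(\partial \colon \pi_6(S^6) \to \pi_5(SU(3))\bigr). $$
The boundary $\partial$ is, up to sign, the clutching map of the fibration, which generates $\pi_5 SU(3)$ as already recorded in the remark following Proposition \ref{funnyaction}. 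Hence $\partial$ is an isomorphism and $\pi_5 G_2 = 0$.

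Consequently $BG_2$ is $3$-connected with $\pi_4 BG_2 = \Z$ and $\pi_5 BG_2 = \pi_6 BG_2 = 0$, so the map $BG_2 \to K(\Z,4)$ classifying a generator $\iota_4 \in H^4(BG_2;\Z)$ is an isomorphism on $\pi_{\le 6}$. Standard obstruction theory then yields, for any CW complex $X$ of dimension $\le 6$, a bijection
$$ [X, BG_2] \xrightarrow{\ \cong\ } H^4(X;\Z), \qquad g \mapsto g^*\iota_4. $$
In particular, the composite $X\xrightarrow{f}BSU(3)\xrightarrow{p}BG_2$ is null iff $f^*p^*\iota_4 = 0$ in $H^4(X;\Z)$.

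To finish, I would identify $p^*\iota_4$ with $\pm c_2 \in H^4(BSU(3);\Z) = \Z\cdot c_2$. In the Serre spectral sequence of $S^6 \to BSU(3) \to BG_2$ the rows $E_2^{*,q}$ vanish for $q \notin \{0,6\}$, so no differential can touch $E_r^{4,0}$. Hence the edge map $p^*\colon H^4(BG_2;\Z)\to H^4(BSU(3);\Z)$ is an isomorphism of infinite cyclic groups and must send $\iota_4$ to $\pm c_2$, which completes the argument. The main obstacle I expect is the vanishing $\pi_5 G_2 = 0$, which itself rests on knowing that the clutching map of $SU(3) \to G_2 \to S^6$ generates $\pi_5 SU(3)$; since this input is already available in the excerpt, the remainder is a routine application of Postnikov and Serre-spectral-sequence machinery.
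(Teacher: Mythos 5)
Your argument is correct, and it takes a genuinely different route from the paper's. The paper works on the $BSU(3)$ side: it uses the fibration $S^6 \to BSU(3) \to BG_2$ together with the bottom stage $K(\Z,6) \to P_6BSU(3) \to K(\Z,4)$ of the Postnikov tower of $BSU(3)$ to show that $c_2(f)=0$ forces $f$ to factor through the fiber $S^6$, which is equivalent to nullity of the composite into $BG_2$; this explicit factorization through $S^6$ is then reused in the subsequent example on $X_g = \sharp^g(S^3\times S^3)$, where the degree of the resulting map $X_g \to S^6$ matters. You instead work on the $BG_2$ side: from $\pi_4 G_2 = \pi_5 G_2 = 0$ you conclude that $BG_2 \to K(\Z,4)$ is $7$-connected, so $[X,BG_2]\cong H^4(X;\Z)$ for any six--dimensional complex $X$, and the Serre spectral sequence of $S^6 \to BSU(3)\to BG_2$ identifies the relevant class with $\pm c_2$. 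Your version yields the slightly stronger statement that the composite $X \to BG_2$ is classified entirely by its $c_2$, at the cost of needing $\pi_5 G_2 = 0$, which you correctly derive from the fact (recorded in the remark after Proposition \ref{funnyaction}) that the clutching map of $SU(3)\to G_2\to S^6$ generates $\pi_5 SU(3)$; note that essentially the same input is implicitly used in the paper's proof, where one needs the map $S^6 \to K(\Z,6)$ appearing in its diagram to be degree $\pm 1$ in order to lift $X \to K(\Z,6)$ through $S^6$. All the individual steps you outline (the long exact sequence computation, the $7$-connectivity and the resulting bijection on homotopy classes, and the edge-homomorphism identification of $p^*\iota_4$ with $\pm c_2$) check out.
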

\begin{proof}
Recall that we have a fiber sequence $S^6 \to BSU(3) \to BG_2$ (cf. \eqref{prin}).
Consider the diagram
$$
\begin{tikzcd}
 S^6 \arrow[d] \arrow[r] & K(\Z,6) \arrow[r] & P_6BSU(3) \arrow[d] \\
BSU(3)  \arrow[rr,"c_2"]  \arrow[rru,"p_6"] & & K(\Z,4)  
\end{tikzcd}
$$
where the map $P_6BSU(3) \to K(\Z, 4)$ is the bottom of the Postnikov tower of $BSU(3)$, with homotopy fiber $K(\Z, 6)$. 
If $c_2=0$ then the map $p_6\circ f$ factors through $K(\Z,6)$ and hence through $S^6$.
Since a map from a six complex into $P_6 BSU(3)$ which lifts to $BSU(3)$ does so uniquely, we
see that $f$ lifts to $S^6$. Conversely, if $f$ factors through the homotopy fiber $S^6$ of $BSU(3) \to BG_2$, then clearly $c_2=0$.
\end{proof}

\begin{example}
As an illustration we can compute the homotopy groups of $\mathcal{J}(X_g)$, where $X_g$ denotes the connected sum of $g$ copies of $S^3 \times S^3$, in terms of the homotopy groups of $S^7$. Note that the space of orientation-compatible almost complex structures on $X_g$ is non-empty and connected, and since $c_1 = c_2 = 0$, by Lemma \ref{c2} the tangent bundle map $X_g \to BU(3)$ factors through $S^6$. Since $\chi(X_g) = 2-2g$, this implies the map $X_g \to S^6$ is of degree $1-g$. We thus have the pullback diagram
\begin{equation}
\label{Xg2}
\begin{tikzcd}
 Z(X_g) \arrow[d] \arrow[r] & SO(8)/U(4) \arrow[d]  \\
               X_g \arrow[r, "1-g"]       & S^6      
\end{tikzcd}
\end{equation}
where $Z(X_g) \rightarrow X_g$ denotes the twistor space of $X_g$,
and a corresponding map of fiber sequences
\begin{equation}
\label{mapfibs}
\begin{tikzcd}
\Map(S^6,S^1) \arrow[d, "r"] \arrow[r] & \Map(S^6,S^7) \arrow[d, "s"] \arrow[r] &
\JJ(S^6) \arrow[d,"t"] \\
\Map(X_g,S^1)\arrow[r] & \Map(X_g,S^7) \arrow[r] & \JJ(X_g)\\
\end{tikzcd}
\end{equation}

Here $r,s,t$ denote the natural maps induced by the degree $1-g$ map $X_g \to S^6$.

For a closed oriented 6-manifold $W$, we have 
$\pi_1 \Map(W,S^7) \cong \pi_1 \Map_*(W,S^7) \cong \pi_0 \Map_*(\Sigma W,S^7) \cong \Z$, with the last isomorphism sending
$f \in [\Sigma W, S^7]_*$ to its degree $n$, i.e. the unique integer $n$ such that $f_*(\Sigma[W])=n[S^7] 
\in H_7(S^7)$.
It follows that the map $s$ in the middle column of \eqref{mapfibs} induces multiplication by 
$1-g$ on fundamental groups. Lemma \ref{pi1} together with the long exact sequence of the bottom row of \eqref{mapfibs} then implies that 
$$ 
\pi_1 \JJ(X_g) = \Z/(2-2g).
$$
There is a cofiber sequence 
$$ S^5 \xrightarrow{w} \vee_{i=1}^{2g} S^3 \to X_g $$
where the attaching map $w$ of the top cell is a sum of Whitehead products of inclusions of $S^3$ in the 
wedge
$$ w= [\iota_1,\iota_2] + \ldots + [\iota_{2g-1}, \iota_{2g}] $$
As $\Sigma w$ is null (see for instance
\cite[Theorem X.8.20]{Wh}), we have 
$$
\Sigma X_g \simeq \left( \vee_{j=1}^{2g} S^4 \right) \vee S^7. 
$$
As $S^7$ is an $H$-space, $\Map(X_g,S^7)\simeq S^7 \times \Map_*(X_g,S^7)$ and hence for all $i\geq 1$ we have 
$$
\pi_i \Map(X_g,S^7) \cong \pi_i(S^7) \oplus \pi_{i-1} \Map_*(\Sigma X_g, S^7) 
\cong \pi_i(S^7) \oplus \bigoplus_{j=1}^{2g} \pi_{i+3} S^7 \oplus \pi_{i+6} S^7.
$$
The homotopy long exact sequence of the bottom row of \eqref{mapfibs} then gives us 
$$\pi_2\JJ(X_g) = \begin{cases}
\Z \oplus \Z/2 & \text{ if } g=1, \\
\Z/2 & \text{ otherwise, }
\end{cases}
$$
while the remaining higher homotopy groups agree with those of $\Map(X_g,S^7)$.
\end{example}

\end{document}